\newtheorem{thm}{Theorem}[section]
\newtheorem{cor}{Corollary}[section]
\newtheorem{lem}{Lemma}[section]
\newtheorem{prop}{Proposition}[section]
\theoremstyle{definition}
\theoremstyle{remark}
\newtheorem{rem}{Remark}[section]
\numberwithin{equation}{section}
\def\ind{{\rm 1\hspace{-0.90ex}1}}
\begin{document}

\title{Gaussian lower bounds for the density via Malliavin calculus
}
\author{Nguyen Tien Dung\thanks{Department of Mathematics, VNU University of Science, Vietnam National University, Hanoi, 334 Nguyen
Trai, Thanh Xuan, Hanoi, 084 Vietnam. Email: dung@hus.edu.vn}\,\,\footnote{Department of Mathematics, FPT University, Hoa Lac High Tech Park, Hanoi, Vietnam. Email: dungnt@fpt.edu.vn}
}

\date{\today}          

\maketitle
\begin{abstract} In this paper, based on a known formula, we use a simple idea to get a new representation for the density of Malliavin differentiable random variables. This new representation is particularly useful for finding lower bounds for the density.
\end{abstract}
\noindent\emph{Keywords:} Malliavin calculus, Density formula, Gaussian estimates.\\
{\em 2010 Mathematics Subject Classification:} 60G15, 60H07.
\section{Introduction}
In this paper, we use the techniques of Malliavin calculus to investigate the density of Malliavin differentiable random variables. In particular, we are going to focus on the problem of finding a Gaussian lower bound for the density. This problem was first discussed by Kusuoka \& Stroock \cite{Kusuoka1987}, and up to date, it is still a subject that is worth studying.
In the last two decades, there have been several papers devoted to the study of densities by means of Malliavin calculus. Among others, we mention the works \cite{Malliavin2009,Enualart2004} and the references therein for sufficient conditions for a random variable to has a density bounded from below. Another fruitful contribution is Nourdin \& Viens' density formula, Theorem 3.1 in \cite{Nourdin2009},  that can be restated as follows.
\begin{prop}\label{prop:denspoiss}
 Let $F\in\mathbb{D}^{1,2}$ be such that $E[F]=0.$ We define the random variable
 \begin{equation}\label{ijrm}
G_F:=\langle DF,-DL^{-1}F\rangle_{\mathfrak{H}}
\end{equation}
and the function $g_F(x):=E[G_F|F=x].$ Then, the law of $F$ has a density $\rho_F$  with respect to the Lebesgue measure if and only if $g_F(F)>0$ a.s.
 In this case $\mathrm{supp}\,\rho_F$ is  a closed interval of $\mathbb{R}$ containing $0$  and we have, for almost all $x\in \mathrm{supp}\,\rho_F:$
\begin{equation}\label{densityCIR05}
\rho_{F}(x)=\frac{E|F|}{2g_F(x)}\exp\left(-\int_0^x\frac{z}{g_F(z)}dz\right).
\end{equation}
\end{prop}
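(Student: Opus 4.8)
\emph{Proof strategy.} The plan is to first extract the Malliavin-calculus identity hidden in the definition of $G_F$, and then argue by pure real analysis. Since $E[F]=0$ we may write $F=LL^{-1}F=-\delta(DL^{-1}F)$ with $DL^{-1}F\in\mathrm{Dom}\,\delta$, and for $\varphi\in C_b^1(\mathbb{R})$ we have $\varphi(F)\in\mathbb{D}^{1,2}$ with $D\varphi(F)=\varphi'(F)\,DF$. Applying the duality between $\delta$ and $D$, $E[F\varphi(F)]=E[\langle D\varphi(F),-DL^{-1}F\rangle_{\mathfrak{H}}]=E[\varphi'(F)G_F]$, and all terms are integrable since $|G_F|\le\|DF\|_{\mathfrak{H}}\|DL^{-1}F\|_{\mathfrak{H}}\in L^1$. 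Conditioning on $F$ then gives the basic identity
\begin{equation}\label{eq:basic}
E[F\varphi(F)]=E[\varphi'(F)\,g_F(F)],\qquad \varphi\in C_b^1(\mathbb{R}).
\end{equation}

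Next I would convert \eqref{eq:basic} into an identity between measures. For $\psi\in C_c(\mathbb{R})$, apply \eqref{eq:basic} to $\varphi(y)=\int_{-\infty}^y\psi(t)\,dt\in C_b^1$: writing $E[F\varphi(F)]=E\big[F\int_{-\infty}^F\psi(y)\,dy\big]$ and using Fubini, the left-hand side becomes $\int_{\mathbb{R}}\psi(y)\Psi(y)\,dy$ with $\Psi(y):=E[F\,\ind_{\{F\ge y\}}]$, while the right-hand side is $\int_{\mathbb{R}}\psi(y)\,g_F(y)\,\mu_F(dy)$, where $\mu_F$ denotes the law of $F$. Since $\psi$ is arbitrary this yields
\begin{equation}\label{eq:meas}
g_F(y)\,\mu_F(dy)=\Psi(y)\,dy\qquad\text{on }\mathbb{R}.
\end{equation}
Using $E[F]=0$ one records that $\Psi\ge0$ everywhere ($F\ind_{\{F\ge y\}}\ge0$ when $y\ge0$, and $\Psi(y)=-E[F\ind_{\{F<y\}}]\ge0$ when $y<0$), that $\Psi$ is non-decreasing on $(-\infty,0)$ and non-increasing on $(0,\infty)$, that $\Psi(\pm\infty)=0$, and that $\Psi(0)=\tfrac12 E|F|$.

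Both implications should then drop out of \eqref{eq:meas}. If $g_F(F)>0$ a.s.\ then $\mu_F(\{g_F=0\})=0$, so \eqref{eq:meas} forces $\mu_F(dy)=\Psi(y)g_F(y)^{-1}\ind_{\{g_F(y)>0\}}\,dy$, whence $\mu_F$ has a density. Conversely, if $\mu_F=\rho_F\,dy$ then \eqref{eq:meas} gives $g_F(y)\rho_F(y)=\Psi(y)$ for a.e.\ $y$, and $\Psi(y)=\int_y^\infty z\rho_F(z)\,dz$ is absolutely continuous with $\Psi'(y)=-y\rho_F(y)$. From the monotonicity and boundary behaviour of $\Psi$ and from $\Psi(0)=\tfrac12 E|F|>0$ (the hypothesis excludes $F\equiv0$, for which $g_F\equiv0$), the open set $\{\Psi>0\}$ is an interval $(\beta,\gamma)\ni0$; off $[\beta,\gamma]$ we get $\Psi\equiv0$ hence $\rho_F=0$ a.e., while on $(\beta,\gamma)$ the relation $g_F\rho_F=\Psi>0$ forces $\rho_F>0$ a.e. Thus $\mathrm{supp}\,\rho_F=[\beta,\gamma]$, a closed interval containing $0$, and since $\rho_F=0$ a.e.\ off it and $\Psi>0$ a.e.\ on it we get $\mu_F(\{g_F=0\})=0$, i.e.\ $g_F(F)>0$ a.s. Finally, on $(\beta,\gamma)$ we have $\rho_F=\Psi/g_F$ with $\Psi,g_F>0$, and $\log\Psi$ is absolutely continuous on compact subintervals with $(\log\Psi)'(y)=\Psi'(y)/\Psi(y)=-y/g_F(y)$; integrating from $0$ to $x$ gives $\Psi(x)=\tfrac12 E|F|\exp\!\left(-\int_0^x\frac{z}{g_F(z)}\,dz\right)$, and dividing by $g_F(x)$ produces \eqref{densityCIR05}.

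The only genuinely probabilistic ingredient is the first step; the part I expect to be most delicate is the passage from the pointwise identity \eqref{eq:basic} to the measure identity \eqref{eq:meas} — the choice of test functions and the Fubini interchange — and, in the converse direction, showing that $\mathrm{supp}\,\rho_F$ is really an interval with $\rho_F>0$ a.e.\ inside it; both of these hinge on the sign of $\Psi$, which is exactly where the hypothesis $E[F]=0$ is indispensable.
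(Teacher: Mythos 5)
Your argument is correct. Note, however, that the paper itself offers no proof of this proposition: it is explicitly a restatement of Theorem 3.1 of Nourdin--Viens \cite{Nourdin2009}, so there is no internal proof to compare against, and what you have written is essentially a reconstruction of the Nourdin--Viens argument itself --- the integration-by-parts identity $E[F\varphi(F)]=E[\varphi'(F)g_F(F)]$ obtained from $F=\delta(-DL^{-1}F)$, followed by the analysis of $\Psi(y)=E\left[F\,\mathbf{1}_{\{F\geq y\}}\right]$ (its sign, monotonicity on either side of $0$, and $\Psi(0)=\tfrac12E|F|$), yielding $g_F\,d\mu_F=\Psi\,dy$ and then the formula by integrating $(\log\Psi)'=-y/g_F(y)$. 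Both directions of the equivalence, the interval structure of the support, and the density formula are handled soundly, so the proposal stands as a valid proof of the quoted result.
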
The definition of the Malliavin derivative $D$ and the operator $L^{-1}$ will be given in Section \ref{uufk}. 
The formula (\ref{densityCIR05}) has been effectively applied to various stochastic equations (see e.g. \cite{DungPG2016} and the references therein). However, its use requires both lower and upper bounds of $G_F.$  In fact, if $\sigma^2_{min}\leq G_F\leq \sigma^2_{max}\,\,a.s.$ then the density of $F$
satisfies
$$\frac{E|F|}{2\sigma^2_{max}}\exp\left(-\frac{x^2}{2\sigma^2_{min}}\right)\leq \rho_F(x)\leq \frac{E|F|}{2\sigma^2_{min}}\exp\left(-\frac{x^2}{2\sigma^2_{max}}\right),\,\,x\in \mathbb{R}.$$
The aim of the present paper is to answer the following question: Can we prove a Gaussian lower bound for the density if we only suppose
that $G_F\geq\sigma^2_{min}?$ (similarly, a Gaussian upper bound if  $0<G_F\leq\sigma^2_{max}$).

The rest of this article is organized as follows. In Section \ref{uufk}, we briefly recall some of the relevant elements of the Malliavin calculus. In Section \ref{hje3}, based on a density formula provided in \cite{nualartm2}, we use a simple idea to obtain a new representation formula for densities. As a consequence, under some additional assumptions, we are able to give an affirmative answer to the above question. In Section \ref{hje3a}, we provide some examples to illustrate the applicability of our abstract results.
\section{Malliavin Calculus}\label{uufk}
Let us recall some elements of Malliavin calculus that we need in order to perform our proofs (for more details see \cite{nualartm2}). Suppose that $\mathfrak{H}$ is a real separable Hilbert space with scalar product denoted by $\langle.,.\rangle_\mathfrak{H}.$ We denote by $W = \{W(h) : h \in \mathfrak{H}\}$ an isonormal Gaussian process defined in a complete probability space $(\Omega,\mathcal{F},P),$ $\mathcal{F}$ is the $\sigma$-field generated by $W.$ Let $\mathcal{S}$ be the set of all smooth cylindrical random variables of the form
\begin{equation}\label{ro}
F=f(W(h_1),...,W(h_n)),
\end{equation}
where $n\in \mathbb{N}, f\in C_b^\infty(\mathbb{R}^n)$ the set of bounded and infinitely differentiable functions with bounded partial derivatives, $h_1,...,h_n\in \mathfrak{H}.$ If $F$ has the form (\ref{ro}), we define its Malliavin derivative with respect to $W$ as the element of $L^2(\Omega,\mathfrak{H})$ given by
$$DF=\sum\limits_{k=1}^n \frac{\partial f}{\partial x_k}(W(h_1),...,W(h_n)) h_k.$$
More generally, we can define the $k$th order derivative $D^kF\in L^2(\Omega, \mathfrak{H}^{\otimes k})$ by iterating the derivative operator $k$ times. For any integer $k\geq 1$ and any $p\geq 1,$ we denote by $\mathbb{D}^{k,p}$ the closure of $\mathcal{S}$ with respect to the norm
$$\|F\|^p_{k,p}:=E|F|^p+\sum\limits_{i=1}^kE\|D^i F\|^p_{\mathfrak{H}^{\otimes i}}.$$
An important operator in the Malliavin calculus theory is the divergence operator $\delta,$ it is the adjoint of the derivative operator $D$ characterized by
$$E\langle DF,u\rangle_{\mathfrak{H}}=E[F\delta(u)]$$
for any $F\in \mathcal{S}$ and $u\in L^2(\Omega,\mathfrak{H}).$ The domain of $\delta$ is the set of all processes $u\in L^2(\Omega,\mathfrak{H})$ such that
$$E|\langle DF,u\rangle_{\mathfrak{H}}|\leq C(u)\|F\|_{L^2(\Omega)},$$
where $C(u)$ is some positive constant depending only on $u.$ Let $F\in \mathbb{D}^{1,2}$ and $u\in Dom\,\delta$ such that  $Fu\in L^2(\Omega,\mathfrak{H}).$ Then $Fu\in Dom\,\delta$ and we have the following relation
\begin{equation}\label{hj44}
\delta(Fu)=F\delta(u)-\left\langle DF,u\right\rangle _{\mathfrak{H}},
\end{equation}
provided the right-hand side is square integrable.

It is known that any random variable $F$ in $L^2(\Omega, \mathcal{F},P)$ can be expanded into an orthogonal sum of its Wiener chaos:
$$F=\sum\limits_{n=0}^\infty J_nF,$$
where $J_0F=E(F)$ and $J_n$ denotes the projection onto the $n$th Wiener chaos. From this chaos expansion one may define the Ornstein-Uhlenbeck operator $L$ by $LF=\sum\limits_{n=0}^\infty-n J_nF$ when $F\in \mathbb{D}^{2,2}$ and its pseudo-inverse by $L^{-1}F=\sum\limits_{n=1}^\infty-\frac{1}{n} J_nF.$ Note that, for any $F\in L^2(\Omega),$ we have $L^{-1}F\in Dom\,L$ and $LL^{-1}F=L^{-1}LF=F-E[F].$ Moreover, the operators $D,\delta$ and $L$ satisfy the following relationship: $F\in Dom\,L$ if and only if $F\in \mathbb{D}^{2,2}$ and, in this case,
\begin{equation}\label{hj4}
\delta D F=-LF.
\end{equation}

\section{Representation and lower bounds for the density}\label{hje3}
This section contains our abstract results, we first provide a representation formula for densities.
\begin{prop}\label{uhuy}
Let $F\in \mathbb{D}^{1,2}$ and $u:\Omega \rightarrow \mathfrak{H}$, and
suppose that $\left\langle DF,u\right\rangle _{\mathfrak{H}}\neq 0$ a.s. and
$\frac{u}{\left\langle DF,u\right\rangle _{\mathfrak{H}}}$ belongs to the domain of $\delta.$ Then the law of $F$ has a continuous density
given by
\begin{equation}\label{kofkl}
\rho_F(x)=\rho_F(a)\exp\left(-\int_a^xw(z)dz\right),\,\,\,x\in \mathrm{supp}\,\rho_F,
\end{equation}
where $a$ is a point in the interior of $\mathrm{supp}\,\rho_F$ and
$$w(z):=E\left[\delta \left(
\frac{u}{\left\langle DF,u\right\rangle _{\mathfrak{H}}}\right)\big| F=z \right].$$
\end{prop}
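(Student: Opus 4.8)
The plan is to mimic the classical Nourdin--Viens argument: derive an integration-by-parts identity showing that for a suitable test function $\varphi$ one has $E[\varphi'(F)] = E[\varphi(F)\,\delta(u/\langle DF,u\rangle_{\mathfrak H})]$, then translate this into a first-order ODE for the density. First I would fix a smooth, compactly supported $\varphi$ and apply the chain rule for the Malliavin derivative: $D(\varphi(F)) = \varphi'(F)\,DF$. Pairing with $u/\langle DF,u\rangle_{\mathfrak H}$ in $\mathfrak H$ gives $\langle D(\varphi(F)), u/\langle DF,u\rangle_{\mathfrak H}\rangle_{\mathfrak H} = \varphi'(F)$, using the hypothesis that $\langle DF,u\rangle_{\mathfrak H}\neq 0$ a.s. Taking expectations and invoking the duality $E\langle DG, v\rangle_{\mathfrak H} = E[G\,\delta(v)]$ with $G = \varphi(F)$ and $v = u/\langle DF,u\rangle_{\mathfrak H}\in \mathrm{Dom}\,\delta$ yields
\begin{equation}\label{plan-ibp}
E[\varphi'(F)] = E\!\left[\varphi(F)\,\delta\!\left(\frac{u}{\langle DF,u\rangle_{\mathfrak H}}\right)\right] = E\big[\varphi(F)\,w(F)\big],
\end{equation}
where the last equality is the tower property defining $w$. (One should be slightly careful that $\varphi(F)\,v$ lands back in $\mathrm{Dom}\,\delta$; since $\varphi$ is bounded this is routine, or one argues first for $F\in\mathcal S$ and passes to the limit.)

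Next I would extract the density from \eqref{plan-ibp}. This is the step I expect to be the main obstacle, because it requires both the \emph{existence} of a density and its \emph{regularity}, not just an identity. The standard route is: \eqref{plan-ibp} says the distributional derivative of the law of $F$, tested against $\varphi$, equals $\int \varphi(x)\,E[w(F)\mid F=x]\,\mu_F(dx)$ where $\mu_F$ is the law of $F$. Taking $\varphi$ to be (a smoothed version of) $\ind_{(-\infty,x]}$ one gets $P(F\le x)$ expressed as an integral of $w$ against $\mu_F$, which forces $\mu_F$ to have a density. More cleanly, I would invoke the density formula from \cite{nualartm2} that the paper says it is building on: that formula already furnishes the continuous density together with a representation, and \eqref{plan-ibp} is precisely the hypothesis needed to apply it with the vector field $u$ in place of $DF$. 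Granting the existence of a continuous density $\rho_F$ supported on an interval, the identity becomes, for $x$ in the interior of the support,
\begin{equation}\label{plan-ode}
\rho_F'(x) = -\,w(x)\,\rho_F(x)
\end{equation}
in the appropriate (weak, then classical by continuity of $w$ on the interior) sense.

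Finally I would solve the ODE \eqref{plan-ode}. Since $\rho_F$ is continuous and strictly positive on the interior of its support (an interval), $\log\rho_F$ is well-defined and locally absolutely continuous there, with $(\log\rho_F)'(x) = -w(x)$. Integrating from a fixed interior point $a$ to $x$ gives
\begin{equation*}
\log\rho_F(x) - \log\rho_F(a) = -\int_a^x w(z)\,dz,
\end{equation*}
that is, $\rho_F(x) = \rho_F(a)\exp\!\big(-\int_a^x w(z)\,dz\big)$, which is exactly \eqref{kofkl}. The one loose end to tie down is that $\rho_F>0$ throughout the interior of the support and that $w$ is locally integrable there; both follow from the support being an interval (so $\rho_F$ cannot vanish at an interior point without contradicting continuity and the definition of support) together with \eqref{plan-ibp} applied to $\varphi$ supported near a putative zero. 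In writing this up I would lean on Proposition \ref{prop:denspoiss} and the underlying result in \cite{nualartm2} for the existence/continuity part, and give the integration-by-parts computation \eqref{plan-ibp} and the ODE solution in full, since those are the genuinely new bookkeeping.
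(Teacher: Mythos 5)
Your route is essentially the paper's: both arguments rest on the density formula from Nualart's book (Exercise 2.1.3, whose proof is exactly your integration-by-parts identity $E[\varphi'(F)]=E\big[\varphi(F)\,\delta\big(u/\langle DF,u\rangle_{\mathfrak H}\big)\big]$), on the fact that the law of an element of $\mathbb{D}^{1,2}$ has interval support, and then on the observation that $\rho_F$ satisfies a first-order linear equation with coefficient $w$, which is integrated to give \eqref{kofkl}. The only structural difference is cosmetic: the paper conditions inside the representation $\rho_F(x)=E\big[\mathbf{1}_{\{F>x\}}\delta\big(u/\langle DF,u\rangle_{\mathfrak H}\big)\big]$ to get the Volterra-type identity $\rho_F(x)=\int_x^\beta w(y)\rho_F(y)\,dy$ and solves it, whereas you pass through the weak ODE $\rho_F'=-w\rho_F$ and integrate $\log\rho_F$.

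The one step that does not stand as written is your justification of strict positivity of $\rho_F$ in the interior of the support: continuity plus ``the support is an interval'' does not prevent a continuous density from vanishing at an interior point of its support (a density proportional to $x^2$ on $[-1,1]$ is a counterexample to the claim as you state it), so taking $\log\rho_F$ is not yet licensed at that stage. The gap is minor and repairable, and the repair is what the paper's computation silently does: starting from $\rho_F(x)=\int_x^\beta w(y)\rho_F(y)\,dy$ (equivalently, from your a.e.\ identity $\rho_F'=-w\rho_F$ with $\rho_F$ absolutely continuous and $w$ locally integrable, which the statement implicitly assumes since $\int_a^x w(z)\,dz$ must make sense), multiply by the integrating factor $\exp\big(\int_a^x w(z)\,dz\big)$ and conclude that $\rho_F(x)\exp\big(\int_a^x w(z)\,dz\big)$ is constant in $x$. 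This yields \eqref{kofkl} directly, and the strict positivity of $\rho_F$ on the interior then comes out as a consequence of the formula rather than being needed as an input.
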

\begin{proof}According to Exercise 2.1.3 in \cite{nualartm2}, the law of $F$ has a continuous density
given by
\begin{equation}\label{Fmla2}
\rho_{F}\left( x\right) =E\left[ \mathbf{1}_{\left\{ F>x\right\} }\delta \left(
\frac{u}{\left\langle DF,u\right\rangle _{\mathfrak{H}}}\right) \right],\,\,\, x\in \mathrm{supp}\,\rho_F.
\end{equation}
Note that the proof of (\ref{Fmla2}) is similar to that of Proposition 2.1.1 in \cite{nualartm2}. Since $F\in\mathbb{D}^{1,2},$ this implies that $\mathrm{supp}\,\rho_F$ is  a closed interval of $\mathbb{R}$ (see Proposition 2.1.7 in \cite{nualartm2}): $\mathrm{supp}\,\rho_F=[\alpha,\beta]$ with $-\infty\leq \alpha<\beta\leq \infty.$ It follows from (\ref{Fmla2}) that
\begin{align*}
\rho_{F}\left( x\right)&=E\left[ \mathbf{1}_{\left\{ F>x\right\} }E\left[ \delta \left(
\frac{u}{\left\langle DF,u\right\rangle _{\mathfrak{H}}}\right)\big| F\right]\right]\\
&=E\left[ \mathbf{1}_{\left\{ F>x\right\} }w_F(F)\right]\\
&=\int_x^\beta w_F(y)\rho_{F}(y)dy.
\end{align*}
Let $a$ be a point in the interior of $\mathrm{supp}\,\rho_F.$  Solving the above equation with initial condition $\rho_F(a)$ gives us (\ref{kofkl}). This completes the proof.
\end{proof}
A general representation like the above conveys no meaning unless provided at least a way to use it. The following corollary provides such a way.
\begin{cor}\label{vik}Let $F\in\mathbb{D}^{2,4}$ be such that $E[F]=0$ and $G_F$ be the random variable defined by (\ref{ijrm}). Assume that $G_F\neq 0$ a.s. and the random variables $\frac{F}{G_F}$ and $\frac{1}{G_F^2}\langle DG_F,-DL^{-1}F\rangle_{\mathfrak{H}}$ belong to $L^2(\Omega).$ Then the law of $F$ has a continuous density given by
\begin{equation}\label{kofkl2}
\rho_F(x)= \rho_F(0)\exp\left(-\int_0^x h_F(z)dz\right)\exp\left(-\int_0^x w_F(z)dz\right),\,\,\,x\in\mathrm{supp}\,\rho_F,
\end{equation}
where the functions $w_F$ and $h_F$ are defined by
$$w_F(z):=E\left[\frac{F}{G_F}\big| F=z\right],\,\,\,h_F(z):=E\left[\frac{1}{G_F^2}\langle DG_F,-DL^{-1}F\rangle_{\mathfrak{H}}\big| F=z\right].$$
\end{cor}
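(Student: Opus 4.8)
The plan is to apply Proposition \ref{uhuy} with the specific choice $u = -DL^{-1}F$, so that $\langle DF, u\rangle_{\mathfrak{H}} = G_F$, and then to unpack the divergence term $\delta(u/G_F) = \delta(-DL^{-1}F / G_F)$ using the product rule (\ref{hj44}). First I would verify the hypotheses of Proposition \ref{uhuy}: since $F \in \mathbb{D}^{2,4}$ we have $L^{-1}F \in \mathbb{D}^{2,4}$ as well (the operator $L^{-1}$ smooths), hence $-DL^{-1}F \in \mathrm{Dom}\,\delta$ with $\delta(-DL^{-1}F) = -LL^{-1}F = F - E[F] = F$ by (\ref{hj4}) and the defining property of $L^{-1}$; and $G_F = \langle DF, -DL^{-1}F\rangle_{\mathfrak{H}} \neq 0$ a.s. is assumed. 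The remaining point is that $u/G_F = -DL^{-1}F/G_F$ lies in $\mathrm{Dom}\,\delta$; this should follow from the product rule once we know the right-hand side is square integrable, which is exactly what the two integrability assumptions $F/G_F \in L^2(\Omega)$ and $G_F^{-2}\langle DG_F, -DL^{-1}F\rangle_{\mathfrak{H}} \in L^2(\Omega)$ are designed to guarantee.

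Next I would carry out the computation of $\delta(u/G_F)$. Writing $u/G_F = G_F^{-1} \cdot u$ and applying (\ref{hj44}) with the scalar random variable $G_F^{-1}$ in place of $F$ (noting $G_F^{-1} \in \mathbb{D}^{1,2}$ under the stated hypotheses, with $D(G_F^{-1}) = -G_F^{-2} DG_F$), we get
\begin{equation*}
\delta\!\left(\frac{u}{G_F}\right) = \frac{1}{G_F}\,\delta(u) - \left\langle D\!\left(\frac{1}{G_F}\right), u\right\rangle_{\mathfrak{H}} = \frac{\delta(u)}{G_F} + \frac{1}{G_F^2}\langle DG_F, u\rangle_{\mathfrak{H}}.
\end{equation*}
Substituting $\delta(u) = F$ and $u = -DL^{-1}F$ yields
\begin{equation*}
\delta\!\left(\frac{u}{G_F}\right) = \frac{F}{G_F} + \frac{1}{G_F^2}\langle DG_F, -DL^{-1}F\rangle_{\mathfrak{H}}.
\end{equation*}
Taking conditional expectation given $F = z$ and splitting the two terms gives $w(z) = w_F(z) + h_F(z)$ in the notation of Proposition \ref{uhuy}.

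Finally I would feed this into formula (\ref{kofkl}) of Proposition \ref{uhuy}. Since $E[F] = 0$, the point $0$ lies in the interior of $\mathrm{supp}\,\rho_F$ (this is part of the conclusion of Proposition \ref{prop:denspoiss}, but can also be seen directly: if $0$ were an endpoint then $F$ would be a.s. one-signed with mean zero, forcing $F \equiv 0$, contradicting $G_F \neq 0$), so we may take $a = 0$ in (\ref{kofkl}). This gives
\begin{equation*}
\rho_F(x) = \rho_F(0)\exp\!\left(-\int_0^x \big(w_F(z) + h_F(z)\big)\,dz\right) = \rho_F(0)\exp\!\left(-\int_0^x h_F(z)\,dz\right)\exp\!\left(-\int_0^x w_F(z)\,dz\right),
\end{equation*}
which is exactly (\ref{kofkl2}).

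I expect the main obstacle to be the justification that $u/G_F \in \mathrm{Dom}\,\delta$ and that all the manipulations with $\delta$ and $D$ are licit — in particular that $G_F^{-1} \in \mathbb{D}^{1,2}$ with the expected derivative and that $G_F^{-1} u \in L^2(\Omega, \mathfrak{H})$, so that the product rule (\ref{hj44}) applies and its right-hand side is square integrable. The hypothesis $F \in \mathbb{D}^{2,4}$ (rather than merely $\mathbb{D}^{1,2}$) is presumably exactly what is needed to make $DG_F$ well-defined in $L^2$ and to push these integrability estimates through via Hölder's inequality; the two explicit $L^2$-assumptions then close the gap. Everything else is a routine application of the divergence product rule and the identity $\delta D L^{-1}F = F - E[F] = F$.
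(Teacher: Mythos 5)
Your proposal is correct and follows essentially the same route as the paper: the choice $u=-DL^{-1}F$ in Proposition \ref{uhuy}, the identity $\delta(-DL^{-1}F)=LL^{-1}F=F$ from (\ref{hj4}), the product rule (\ref{hj44}) applied to $G_F^{-1}u$ to split $w$ into $w_F+h_F$, and taking $a=0$ since $E[F]=0$ forces $0$ to lie in the interior of the support. Your extra remarks on why $u/G_F\in\mathrm{Dom}\,\delta$ and why $0$ is interior are just slightly more explicit versions of what the paper asserts in passing.
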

\begin{proof}Since $E[F]=0,$ this implies that $\alpha<0<\beta$ and hence, we can take $a=0$ in Theorem \ref{uhuy}. On the other hand, we choose $u=-DL^{-1}F.$ By the relation (\ref{hj4}) we have
$$\delta(u)=-\delta(DL^{-1}F)=LL^{-1}F=E-E[F]=F.$$
The conditions on $F$ and $G_F$ allow us to use the relation (\ref{hj44}) and we obtain
\begin{align*}
\delta \left(\frac{u}{\left\langle DF,u\right\rangle _{\mathfrak{H}}}\right)&=\delta \left(\frac{u}{\left\langle DF,-DL^{-1}F\right\rangle _{\mathfrak{H}}}\right)\\
&=\delta \left(\frac{u}{G_F}\right)\\
&=\frac{\delta \left(u\right)}{G_F}+\frac{1}{G_F^2}\langle DG_F,u\rangle_{\mathfrak{H}}\\
&=\frac{F}{G_F}+\frac{1}{G_F^2}\langle DG_F,-DL^{-1}F\rangle_{\mathfrak{H}}.
\end{align*}
Hence, we obtain $w(F)=w_F(F)+h_F(F).$ Inserting this relation into (\ref{kofkl}) gives us (\ref{kofkl2}). This completes the proof.
\end{proof}
We now are ready to provide Gaussian lower bounds for the density.
\begin{thm}\label{oklf}Let $F\in\mathbb{D}^{2,4}$ be such that $E[F]=0.$ Suppose that $G_F\geq \sigma^2_{min}$ a.s. for some deterministic constant  $\sigma_{min}\neq 0.$ Then, the density of $F$ exists and satisfies
\begin{equation}\label{bvj1}
\rho_F(x)\geq \rho_F(0)\exp\left(-\int_0^x h_F(z)dz\right)\exp\left(-\frac{x^2}{2\sigma^2_{min}}\right),\,\,\,x\in \mathbb{R}.
\end{equation}
Moreover, if for some real number $m_1,$ $h_F(F)\geq m_1\,\,a.s.$  then
\begin{equation}\label{bvj2}\rho_F(x)\geq \rho_F(0)\exp\left(-\frac{x^2}{2\sigma^2_{min}}-m_1x\right),\,\,\,x\leq 0.
\end{equation}
If for some real number $m_2,$ $h_F(F)\leq m_2\,\,a.s.$ then
\begin{equation}\label{bvj3}\rho_F(x)\geq \rho_F(0)\exp\left(-\frac{x^2}{2\sigma^2_{min}}-m_2x\right),\,\,\,x\geq 0.
\end{equation}
If for some real number $M>0,$ $|h_F(F)|\leq M\,\,a.s.$ then
\begin{equation}\label{bvj4}
\rho_F(x)\geq \rho_F(0)\exp\left(-\frac{x^2}{2\sigma^2_{min}}-M|x|\right),\,\,\,x\in \mathbb{R}.
\end{equation}
\end{thm}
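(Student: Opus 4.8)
My plan is to obtain all four bounds from the representation formula (\ref{kofkl2}) of Corollary \ref{vik}. The first step is to verify that the hypotheses of that corollary hold under the assumptions of the theorem. Since $\sigma_{\min}\neq 0$ we have $\sigma_{\min}^2>0$, hence $G_F\ge\sigma_{\min}^2>0$ a.s.\ and in particular $G_F\neq 0$ a.s.; moreover $|F/G_F|\le|F|/\sigma_{\min}^2$, which lies in $L^2(\Omega)$ because $F\in\mathbb D^{2,4}\subset L^2(\Omega)$. For the remaining requirement I would bound, pointwise,
\[
\left|\frac{1}{G_F^2}\langle DG_F,-DL^{-1}F\rangle_{\mathfrak H}\right|\le\frac{1}{\sigma_{\min}^4}\,\|DG_F\|_{\mathfrak H}\,\|DL^{-1}F\|_{\mathfrak H},
\]
expand $DG_F$ by the chain and product rules in terms of $D^2F$, $DF$ and $D(DL^{-1}F)$, and control the resulting $L^2(\Omega)$ norm via Hölder's inequality together with the contraction property of the Ornstein--Uhlenbeck semigroup (Mehler's formula gives $\|DL^{-1}F\|_{L^p(\Omega;\mathfrak H)}\le\|DF\|_{L^p(\Omega;\mathfrak H)}$, and similarly for $D(DL^{-1}F)$ in terms of $D^2F$). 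I expect this integrability bookkeeping --- arranging the Hölder exponents so that $\mathbb D^{2,4}$ really suffices --- to be the most delicate point; everything after it is a short computation.

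Granting Corollary \ref{vik}, the density $\rho_F$ exists and is continuous, the origin is interior to $\mathrm{supp}\,\rho_F$ (so $\rho_F(0)>0$), and (\ref{kofkl2}) holds on $\mathrm{supp}\,\rho_F$. The crucial observation is that on $\{F=z\}$ one has $F/G_F=z/G_F$, so
\[
w_F(z)=E\left[\frac{F}{G_F}\ \Big|\ F=z\right]=z\,E\left[\frac{1}{G_F}\ \Big|\ F=z\right],\qquad 0<E\left[\frac{1}{G_F}\ \Big|\ F=z\right]\le\frac{1}{\sigma_{\min}^2}.
\]
Thus $0\le w_F(z)\le z/\sigma_{\min}^2$ for $z>0$ and $z/\sigma_{\min}^2\le w_F(z)\le 0$ for $z<0$, and in either case --- for $x<0$ write $\int_0^x=-\int_x^0$ and use $w_F(z)\ge z/\sigma_{\min}^2$ on $[x,0]$ --- one gets
\[
\int_0^x w_F(z)\,dz\le\int_0^x\frac{z}{\sigma_{\min}^2}\,dz=\frac{x^2}{2\sigma_{\min}^2},\qquad x\in\mathrm{supp}\,\rho_F.
\]
Inserting $\exp\!\big(-\!\int_0^x w_F(z)\,dz\big)\ge\exp\!\big(-x^2/(2\sigma_{\min}^2)\big)$ into (\ref{kofkl2}) gives (\ref{bvj1}).

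The three refinements I would handle by estimating $\int_0^x h_F(z)\,dz$ in exactly the same spirit. If $h_F(F)\ge m_1$ a.s.\ then $h_F(z)\ge m_1$ for $\rho_F$-a.e.\ $z$, so for $x\le 0$, $\int_0^x h_F(z)\,dz=-\!\int_x^0 h_F(z)\,dz\le m_1x$, whence $\exp(-\!\int_0^x h_F)\ge e^{-m_1x}$; combining with the bound on $\int_0^x w_F$ (which also holds for $x\le0$) yields (\ref{bvj2}). Symmetrically $h_F(F)\le m_2$ gives $\int_0^x h_F(z)\,dz\le m_2x$ for $x\ge0$ and hence (\ref{bvj3}), and (\ref{bvj4}) follows by applying (\ref{bvj2}) with $m_1=-M$ for $x\le0$ and (\ref{bvj3}) with $m_2=M$ for $x\ge0$, using $Mx=-M|x|$ when $x\le0$. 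Lastly, to justify the stated ranges of $x$: in these three refined cases $h_F$ is bounded on $\mathrm{supp}\,\rho_F$, so the lower bounds just obtained are strictly positive there; if $\mathrm{supp}\,\rho_F$ had a finite endpoint, continuity of $\rho_F$ on $\mathbb R$ would force $\rho_F$ to vanish at it, contradicting that positivity, so $\mathrm{supp}\,\rho_F=\mathbb R$ and (\ref{bvj2})--(\ref{bvj4}) hold throughout. For (\ref{bvj1}) the bound is established on $\mathrm{supp}\,\rho_F$.
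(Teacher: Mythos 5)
Your core argument coincides with the paper's: invoke the representation \eqref{kofkl2} of Corollary \ref{vik}, bound $w_F(z)=E[F/G_F\,|\,F=z]=z\,E[1/G_F\,|\,F=z]$ against $z/\sigma^2_{min}$ on either side of the origin to get $\int_0^x w_F(z)\,dz\le x^2/(2\sigma^2_{min})$, and then handle \eqref{bvj2}--\eqref{bvj4} by the obvious one- or two-sided estimates on $\int_0^x h_F(z)\,dz$ (the paper dismisses these as straightforward). The one genuine gap is the support. The paper's very first step is to cite Corollary 3.3 of Nourdin--Viens: $G_F\ge\sigma^2_{min}>0$ a.s.\ forces $\mathrm{supp}\,\rho_F=\mathbb{R}$, after which every bound holds for all real $x$. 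You prove \eqref{bvj1} only on $\mathrm{supp}\,\rho_F$ and say so explicitly; but the theorem asserts \eqref{bvj1} for every $x\in\mathbb{R}$, and your continuity device cannot rescue it in this case, because without an upper bound on $h_F$ the right-hand side of \eqref{bvj1} could tend to $0$ at a finite endpoint, so no contradiction with continuity is available. The missing step is precisely $\mathrm{supp}\,\rho_F=\mathbb{R}$, obtained either by the citation or by an argument through Proposition \ref{prop:denspoiss}/\eqref{densityCIR05}, using $g_F(F)=E[G_F|F]\ge\sigma^2_{min}$. Your continuity argument is a correct self-contained substitute for the citation in the bounded-$h_F$ cases \eqref{bvj2}--\eqref{bvj4} (noting that \eqref{bvj2} alone only forces the left half-line into the support and \eqref{bvj3} the right half-line, which is all those statements need), and this is a genuinely different, more elementary way to settle the support there.

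A secondary remark: your effort to verify the $L^2$ hypotheses of Corollary \ref{vik} goes beyond the paper, which applies the corollary without comment, so do not treat it as the crux; but be aware your proposed route is shakier than you suggest. Bounding $\bigl|\tfrac{1}{G_F^2}\langle DG_F,-DL^{-1}F\rangle_{\mathfrak{H}}\bigr|\le\sigma_{min}^{-4}\|DG_F\|_{\mathfrak{H}}\|DL^{-1}F\|_{\mathfrak{H}}$ and expanding $DG_F$ produces a product of \emph{three} factors, each controlled in $L^4$ via $\|DF\|_{\mathfrak{H}}$ and $\|D^2F\|_{\mathfrak{H}\otimes\mathfrak{H}}$ through Mehler's formula, which by H\"older lands you in $L^{4/3}$, not $L^2$; so $\mathbb{D}^{2,4}$ does not obviously suffice along that path, and the ``bookkeeping'' is not a routine afterthought. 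Everything else in your proposal matches the paper's proof.
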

\begin{proof} We first recall that the fact $G_F\geq \sigma^2_{min}$ implies $\mathrm{supp}\,\rho_F=\mathbb{R},$ see Corollary 3.3 in \cite{Nourdin2009}. 
When $x\geq 0,$ we have
\begin{align*}
-\int_0^x w_F(z)dz
&\geq -\int_0^x E\left[\frac{F}{\sigma^2_{min}}\big| F=z\right]dz\\
&= -\int_0^x \frac{z}{\sigma^2_{min}}dz=-\frac{x^2}{2\sigma^2_{min}}.
\end{align*}
Similarly, when $x\leq 0,$ we also have
$$-\int_0^x w_F(z)dz=\int^0_x E\left[\frac{F}{G_F}\big| F=z\right]dz\geq  \int^0_x E\left[\frac{F}{\sigma^2_{min}}\big| F=z\right]dz= -\frac{x^2}{2\sigma^2_{min}}.$$
Thus (\ref{bvj1}) is verified for all $x\in \mathbb{R}.$ The proof of (\ref{bvj2}), (\ref{bvj3}) and (\ref{bvj4})  is straightforward, so we omit it.
\end{proof}
\begin{rem}
Similarly, if $0< G_F\leq \sigma^2_{max}$ a.s. we also have an upper bound for the density that reads
$$\rho_F(x)\leq \rho_F(0)\exp\left(-\int_0^x h_F(z)dz\right)\exp\left(-\frac{x^2}{2\sigma^2_{max}}\right),\,\,\,x\in\mathrm{supp}\,\rho_F.$$
However, because of appearance of $G_F$ in the denominators, it will be non-trivial to check the square integrable property of $\frac{F}{G_F}$ and $\frac{1}{G_F^2}\langle DG_F,-DL^{-1}F\rangle_{\mathfrak{H}}$ and the boundedness of $h_F.$ That is why we only provide lower bounds as in Theorem \ref{oklf}. To evaluate an upper bound, a popular method is to use the formula (\ref{Fmla2}) with $u=DF$ . The reader can consult Proposition 2.1.2 in \cite{nualartm2} for such a evaluation.
\end{rem}
\begin{rem}
If the random variable $G_F$ satisfies $\sigma^2_{min}\leq G_F\leq \sigma^2_{max}$ a.s. the formula (\ref{kofkl2}) will provide us lower and upper bounds for the density. However, in this case, we should use the formula (\ref{densityCIR05}) to get Gaussian estimates for the density because Proposition \ref{prop:denspoiss} only requires $F\in \mathbb{D}^{1,2}.$
\end{rem}
We end up this section by providing a variant of density formula (\ref{kofkl2}) which can be of interest for the readers who are not used to working with the Ornstein-Uhlenbeck operator. Let $(W_t)_{t\in[0,T]}$ be a standard Brownian motion defined on a complete probability space $(\Omega,\mathcal{F},\mathbb{F},P)$, where $\mathbb{F}=(\mathcal{F}_t)_{t\in [0,T]}$ is a natural filtration generated by $W.$ Now Malliavin derivative operator is with respect to $W$ and $\mathfrak{H}=L^2[0,T].$ We consider the stochastic process $u_s:=E[D_sF|\mathcal{F}_s].$ Then, by the Clark-Ocone formula we have
$$\delta(u)=\int_0^TE[D_sF|\mathcal{F}_s]dW_s=F-E[F].$$
Hence, with the exact proof of Corollary \ref{vik}, we obtain the following.
\begin{thm}\label{9hj3}Let $F\in\mathbb{D}^{2,4}$ be such that $E[F]=0.$ Define the random variable
$$\Phi_F:=\int_0^TD_sFE[D_sF|\mathcal{F}_s]ds.$$
 Assume that $\Phi_F\neq 0$ a.s. and the random variables $\frac{F}{\Phi_F}$ and $\frac{1}{\Phi_F^2}\int_0^TD_s\Phi_FE[D_sF|\mathcal{F}_s]ds$ belong to $L^2(\Omega).$ Then the law of $F$ has a continuous density given by
\begin{equation}\label{krkl2}
\rho_F(x)= \rho_F(0)\exp\left(-\int_0^x \overline{h}_F(z)dz\right)\exp\left(-\int_0^x \overline{w}_F(z)dz\right),\,\,\,x\in\mathrm{supp}\,\rho_F,
\end{equation}
where the functions $\overline{w}_F$ and $\overline{h}_F$ are defined by
$$\overline{w}_F(z):=E\left[\frac{F}{\Phi_F}\big| F=z\right],\,\,\,\overline{h}_F(z):=E\left[\frac{1}{\Phi_F^2}\int_0^TD_s\Phi_FE[D_sF|\mathcal{F}_s]ds\big| F=z\right].$$
\end{thm}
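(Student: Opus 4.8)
The plan is to mirror the proof of Corollary \ref{vik} almost verbatim, replacing the Ornstein–Uhlenbeck choice $u=-DL^{-1}F$ by the Clark–Ocone choice $u_s:=E[D_sF\,|\,\mathcal{F}_s]$, which is already singled out in the paragraph preceding the statement. The starting point is Proposition \ref{uhuy}: since $E[F]=0$ forces $0$ to lie in the interior of $\mathrm{supp}\,\rho_F$ (as noted in the proof of Corollary \ref{vik}), we may take $a=0$ in \eqref{kofkl}. So the whole argument reduces to computing $\delta\bigl(u/\langle DF,u\rangle_{\mathfrak H}\bigr)$ explicitly with this $u$ and checking that the hypotheses licence each manipulation.

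First I would record that with $\mathfrak H=L^2[0,T]$ one has $\langle DF,u\rangle_{\mathfrak H}=\int_0^T D_sF\,E[D_sF\,|\,\mathcal{F}_s]\,ds=\Phi_F$, so the assumption $\Phi_F\neq 0$ a.s.\ is exactly the nonvanishing hypothesis needed in Proposition \ref{uhuy}. Next, by the Clark–Ocone formula, $\delta(u)=\int_0^T E[D_sF\,|\,\mathcal{F}_s]\,dW_s=F-E[F]=F$; this is the analogue of the identity $\delta(-DL^{-1}F)=F$ used in Corollary \ref{vik}. Then I would apply the product-type divergence rule \eqref{hj44} to $F'u$ with $F'=1/\Phi_F$: since $\tfrac{F}{\Phi_F}\in L^2(\Omega)$ and $\tfrac{1}{\Phi_F^2}\int_0^T D_s\Phi_F\,E[D_sF\,|\,\mathcal{F}_s]\,ds\in L^2(\Omega)$, the right-hand side of \eqref{hj44} is square integrable, $u/\Phi_F\in\mathrm{Dom}\,\delta$, and
\begin{align*}
\delta\left(\frac{u}{\langle DF,u\rangle_{\mathfrak H}}\right)
&=\delta\left(\frac{u}{\Phi_F}\right)
=\frac{\delta(u)}{\Phi_F}+\frac{1}{\Phi_F^{2}}\langle D\Phi_F,u\rangle_{\mathfrak H}\\
&=\frac{F}{\Phi_F}+\frac{1}{\Phi_F^{2}}\int_0^T D_s\Phi_F\,E[D_sF\,|\,\mathcal{F}_s]\,ds.
\end{align*}
Taking conditional expectation given $F=z$ gives $w(z)=\overline{w}_F(z)+\overline{h}_F(z)$ in the notation of Proposition \ref{uhuy}, and substituting into \eqref{kofkl} with $a=0$ yields \eqref{krkl2}.

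The main technical point — the only place where anything is not purely formal — is verifying that $u/\Phi_F$ really lies in $\mathrm{Dom}\,\delta$ and that the differentiability needed to write $\langle D(1/\Phi_F),u\rangle_{\mathfrak H}=-\Phi_F^{-2}\langle D\Phi_F,u\rangle_{\mathfrak H}$ is legitimate. This is handled by the hypothesis $F\in\mathbb{D}^{2,4}$ (so that $\Phi_F\in\mathbb{D}^{1,2}$, giving $D\Phi_F$ meaning, exactly as in Corollary \ref{vik}, where the same regularity makes $G_F\in\mathbb{D}^{1,2}$) together with the two stated $L^2$-integrability conditions, which guarantee the right-hand side of \eqref{hj44} is square integrable. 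I would note that the derivative of $\Phi_F$ can be expanded by the chain rule and the commutation of $D$ with conditional expectation, but since only the pairing $\langle D\Phi_F,u\rangle_{\mathfrak H}$ enters and it is assumed integrable after division by $\Phi_F^2$, no explicit formula for $D_s\Phi_F$ is required. Everything else is identical to the proof of Corollary \ref{vik}, so I would simply remark that "with the exact proof of Corollary \ref{vik}" the conclusion follows, which is precisely what the paragraph before the theorem already announces.
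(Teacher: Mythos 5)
Your proposal is correct and follows essentially the same route as the paper: the author simply takes $u_s=E[D_sF\,|\,\mathcal{F}_s]$, notes $\delta(u)=F-E[F]=F$ by Clark--Ocone, and then repeats the proof of Corollary \ref{vik} with $\Phi_F$ in place of $G_F$, which is exactly your argument.
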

\begin{rem}The conclusion of Theorem \ref{oklf} still holds true if we replace $G_F$ by $\Phi_F$ and $h_F$ by $\overline{h}_F.$
\end{rem}
\begin{rem} The following problem will be interesting to investigate: Find other choices for $u$ in Proposition \ref{uhuy}.

\end{rem}

\section{Examples}\label{hje3a} In this section, we provide some examples to illustrate the applicability of our abstract results. 
\subsection{Additive functional of Gaussian processes}
Let $(X_t)_{t\in [0,T]}$ be a centered Gaussian process with continuous paths. It is known from Section 3.2.2 in \cite{Nourdin2009} that the Gaussian space generated by $X$ can be identified with an isonormal Gaussian process of the type $X=\{X(h):h\in \mathfrak{H}\},$ where the real and separable Hilbert space $\mathfrak{H}$ is defined as follows: (i) denote by $\mathcal{E}$ the set of all $\mathbb{R}$-valued step functions on $[0, T],$ (ii) define $\mathfrak{H}$ as the Hilbert space obtained by closing $\mathcal{E}$ with respect to the scalar product
$$\left\langle \ind_{[0,s]},\ind_{[0,t]}\right\rangle_{\mathfrak{H}}=E(X_sX_t).$$
In particular, with such a notation, we identify $X_t$ with $X(\ind_{[0,t]}).$ We now consider the functional
\begin{equation}
Y_T:=\int_0^T f(X_s)ds-\int_0^T E[f(X_s)]ds.
\end{equation}
The density of $Y_T$ has been discussed by Nourdin and Viens, see Proposition 3.10 in \cite{Nourdin2009}. In order to be able to obtain Gaussian estimates, they require the condition $c\leq f'(x)\leq C$ for all $x\in\mathbb{ R}$ and for some $C,c>0.$ Our Theorem \ref{oklf} allows us to address the case, where $f'(x)$ is not bounded above, and we obtain the following.
\begin{thm}Assume that $E[X_sX_v]\geq 0$ for all $s,v\in [0,T],$ and $f:\mathbb{R}\to \mathbb{R}$ is a twice differentiable function satisfying $|f'(x)|\geq c$  for all $x\in \mathbb{R}.$ Then, the random variable $Y_T$ admits a density, which satisfies

\noindent(i) If $f''(x)\geq 0$ for all $x\in \mathbb{R},$ then
\begin{equation}\label{ffjk2}
\rho_{Y_T}(x)\geq \rho_{Y_T}(0)\exp\left(-\frac{x^2}{2c^2\sigma^2_{T}}\right),\,\,\,x\leq 0.
\end{equation}
\noindent(ii) If $f''(x)\leq 0$ for all $x\in \mathbb{R},$ then
\begin{equation}\label{ffjk3}\rho_{Y_T}(x)\geq \rho_{Y_T}(0)\exp\left(-\frac{x^2}{2c^2\sigma^2_{T}}\right),\,\,\,x\geq 0,
\end{equation}
where $\sigma^2_{T}:=\int_0^T\int_0^TE[X_sX_v]dsdv.$
\end{thm}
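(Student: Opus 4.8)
The plan is to derive this from Theorem~\ref{oklf} applied to $F=Y_T$. First I would record the structural facts: $E[Y_T]=0$ by construction, and, since $f$ is twice differentiable,
\[
DY_T=\int_0^T f'(X_s)\,\ind_{[0,s]}\,ds,\qquad D^2Y_T=\int_0^T f''(X_s)\,\ind_{[0,s]}\otimes\ind_{[0,s]}\,ds,
\]
as $\mathfrak{H}$- and $\mathfrak{H}^{\otimes2}$-valued random variables. Hence $Y_T\in\mathbb{D}^{2,4}$; this uses only that the Gaussian moments of $f'(X_s)$ and $f''(X_s)$ are finite, and it is the one place a mild growth condition on $f$ enters. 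Note also that $f'$ is continuous, so $|f'|\ge c$ forces $f'$ to keep a constant sign on $\mathbb{R}$; both signs will be handled together below.

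Next I would compute $G_{Y_T}=\langle DY_T,-DL^{-1}Y_T\rangle_{\mathfrak{H}}$ through the Mehler representation of $-DL^{-1}$ as in Section~3.2.2 of \cite{Nourdin2009}. With $\widetilde X$ an independent copy of $X$ and $\widetilde E$ the expectation in $\widetilde X$,
\[
-DL^{-1}Y_T=\int_0^\infty e^{-u}\!\int_0^T \widetilde E\Big[f'\big(e^{-u}X_s+\sqrt{1-e^{-2u}}\,\widetilde X_s\big)\Big]\ind_{[0,s]}\,ds\,du,
\]
so that, using $\langle\ind_{[0,s]},\ind_{[0,v]}\rangle_{\mathfrak{H}}=E[X_sX_v]$,
\[
G_{Y_T}=\int_0^\infty e^{-u}\!\int_0^T\!\!\int_0^T f'(X_s)\,\widetilde E\Big[f'\big(e^{-u}X_v+\sqrt{1-e^{-2u}}\,\widetilde X_v\big)\Big]E[X_sX_v]\,ds\,dv\,du.
\]
Since $E[X_sX_v]\ge0$ and $f'$ has constant sign with $|f'|\ge c$, the product $f'(X_s)\,\widetilde E[f'(\cdots)]\ge c^2$ pointwise; together with $\int_0^\infty e^{-u}\,du=1$ this gives $G_{Y_T}\ge c^2\int_0^T\!\int_0^T E[X_sX_v]\,ds\,dv=c^2\sigma^2_{T}=:\sigma^2_{min}>0$. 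In particular $G_{Y_T}\neq0$ a.s., and since $1/G_{Y_T}\le1/(c^2\sigma^2_{T})$ is bounded, the $L^2$-hypotheses of Corollary~\ref{vik} reduce to integrability of $Y_T$, $DG_{Y_T}$ and $DL^{-1}Y_T$, which again follows from $Y_T\in\mathbb{D}^{2,4}$. Thus Theorem~\ref{oklf} applies: $Y_T$ has a continuous density and $\rho_{Y_T}(x)\ge\rho_{Y_T}(0)\exp(-\int_0^x h_{Y_T}(z)\,dz)\exp(-x^2/(2c^2\sigma^2_{T}))$.

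It then remains to pin down the sign of $h_{Y_T}(Y_T)$. Differentiating the displayed formula for $G_{Y_T}$ in the $X$-variable and pairing with $-DL^{-1}Y_T$ (bringing in a further independent copy $\widetilde{\widetilde X}$) writes $\langle DG_{Y_T},-DL^{-1}Y_T\rangle_{\mathfrak{H}}$ as an iterated integral whose integrand is a sum of two terms, each a product of the nonnegative factors $E[X_sX_v]$, $E[X_sX_w]$, $E[X_vX_w]$, $e^{-u}$, of $\widetilde{\widetilde E}[f'(\cdots)]$, and of exactly one factor among $f''(X_s)$, $\widetilde E[f''(\cdots)]$ together with one constant-sign factor among $f'(X_s)$, $\widetilde E[f'(\cdots)]$. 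In case (i), $f''\ge0$: a short sign check — treating the two possible signs of $f'$ separately, which merely flips a pair of sign factors at once — shows the integrand is $\ge0$, hence $\langle DG_{Y_T},-DL^{-1}Y_T\rangle_{\mathfrak{H}}\ge0$ a.s. and $h_{Y_T}(Y_T)\ge0$ a.s.; then (\ref{bvj2}) with $m_1=0$ gives (\ref{ffjk2}). In case (ii), $f''\le0$: the same computation gives $\langle DG_{Y_T},-DL^{-1}Y_T\rangle_{\mathfrak{H}}\le0$ a.s., so $h_{Y_T}(Y_T)\le0$ a.s., and (\ref{bvj3}) with $m_2=0$ gives (\ref{ffjk3}).

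I expect the main obstacle to be purely technical: justifying the differentiation under the integral sign and the interchange with the Mehler integral (which rests on $Y_T\in\mathbb{D}^{2,4}$ and a dominated-convergence estimate), and then carefully tracking the signs of the many factors in $\langle DG_{Y_T},-DL^{-1}Y_T\rangle_{\mathfrak{H}}$. Apart from Theorem~\ref{oklf} itself, the only structural input is the positivity $E[X_sX_v]\ge0$, which is exactly what makes every integrand have a definite sign.
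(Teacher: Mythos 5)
Your proposal is correct and follows essentially the same route as the paper: the Mehler-type representation of $-DL^{-1}Y_T$ from Nourdin--Viens to get $G_{Y_T}\geq c^2\sigma_T^2$, then a sign analysis of $\langle DG_{Y_T},-DL^{-1}Y_T\rangle_{\mathfrak{H}}$ (equivalently, of the second derivatives $D^2Y_T$ and $-D^2L^{-1}Y_T$) to conclude $h_{Y_T}(Y_T)\geq 0$ or $\leq 0$, and finally (\ref{bvj2}) or (\ref{bvj3}) with $m_1=0$ or $m_2=0$. You are merely more explicit than the paper about the $\mathbb{D}^{2,4}$/integrability hypotheses and you treat both signs of $f'$ simultaneously where the paper fixes $f'\geq c$ and says the other case is similar.
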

\begin{proof}We only consider the case $f'(x)\geq c$ for all $x\in \mathbb{R}$ because the case $f'(x)\leq -c$ can be treated similarly. The Malliavin derivative of $Y_T$ with respect to $X$ is given by
$$D_rY_T=\int_0^T f'(X_s)\ind_{[0,s]}(r)ds,\,\,r\in [0,T].$$
Thanks to Proposition 3.7 in \cite{Nourdin2009} we have
$$-D_rL^{-1}Y_T=\int_0^\infty e^{-u}\int_0^T E'[f'(e^{-u}X_s+\sqrt{1-e^{-2u}}X'_s)]\ind_{[0,s]}(r)dsdu,\,\,r\in [0,T].$$
and
$$G_{Y_T}=\int_0^\infty e^{-u}\int_{0}^T\int_{0}^Tf'(X_s)E'[f'(e^{-u}X_v+\sqrt{1-e^{-2u}}X'_v)]E[X_sX_v]dsdvdu,$$
where $X'$ stands for an independent copy of $X$ and $E'$ is the
expectation with respect to $X'.$ Hence, it holds that
\begin{align*}
G_{Y_T}&\geq \int_0^\infty e^{-u}\int_{0}^T\int_{0}^Tc^2E[X_sX_v]dsdvdu\\
&= c^2\int_0^T\int_0^TE[X_sX_v]dsdv=c^2\sigma^2_{T}\,\,a.s.
\end{align*}
Furthermore, we have, for $r,\theta\in [0,T],$
$$D_\theta D_rY_T=\int_0^T f''(X_s)\ind_{[0,s]}(r)\ind_{[0,s]}(\theta)ds,$$
$$-D_\theta D_rL^{-1}Y_T=\int_0^\infty e^{-2u}\int_0^T E'[f''(e^{-u}X_s+\sqrt{1-e^{-2u}}X'_s)]\ind_{[0,s]}(r)\ind_{[0,s]}(\theta)dsdu.$$
Thus, if $f''(x)\geq 0$ for all $x\in \mathbb{R},$ then $D_\theta D_rY_T\geq 0$ and $-D_\theta D_rL^{-1}Y_T\geq 0.$ As a consequence, by its definition, $h_F(F)\geq 0\,\,a.s.$ So (\ref{ffjk2}) follows from (\ref{bvj2}).

Similarly, if $f''(x)\leq 0$ for all $x\in \mathbb{R},$ then
$h_F(F)\leq 0\,\,a.s.$ and (\ref{ffjk3}) follows from (\ref{bvj3}).
This completes the proof.
\end{proof}

\subsection{SDEs with fractional noise}
We consider stochastic differential equations driven by fractional Brownian motion of the form
\begin{equation}\label{7hj3}
X_t=x_0+\int_0^tb(s,X_s)ds+\int_0^t\sigma(s,X_s)dB^H_s,\,\,t\in[0,T],
\end{equation}
where $x_0\in \mathbb{R},$ $B^H=(B^H)_{t\in [0,T]}$ is a fractional
Brownian motion (fBm) of Hurst parameter $H\in (\frac{1}{2},1)$ and
the stochastic integral is interpreted as a pathwise
Riemann-Stieltjes integral, see e.g. \cite{M. Zahle}. Recall that
$B^H$ is a centered Gaussian process and it admits the so-called
Volterra representation (see e.g. \cite{nualartm2} pp. 277-279)
\begin{equation}\label{densityCIR02}
B^H_t=\int_0^t K_H(t,s)dW_s,
\end{equation}
where $(W_t)_{t\in [0,T]}$ is a standard Brownian motion,
\[
K_H(t,s):=c_H\,s^{1/2-H}\int_s^t (u-s)^{H-\frac{3}{2}}u^{H-1/2}d u,\quad\text{$s\leq t$}
\]
and $c_H=\sqrt{\frac{H(2H-1)}{\beta(2-2H,H-1/2)}},$ where $\beta$ is the Beta function.

By different approaches, the density estimates for the solutions to
the equation (\ref{7hj3}) have been recently obtained in
\cite{Besalu2016,DungPG2014}. In both these two papers, the authors
require $c\leq |\sigma(t,x)|\leq C$ for all $(t,x)\in
[0,T]\times\mathbb{ R}$ and for some $C,c>0.$ When $H=\frac{1}{2},$
$B^H$ reduces to a Brownian motion and in this case, Nualart
\cite[Theorem 2.3]{Enualart2004} only requires $|\sigma(t,x)|\geq c$ to get a
Gaussian lower bound. Here we are able to obtain such a similar
result for the case $H>\frac{1}{2}.$

For a differentiable function $f,$ we denote
\[
f'_1(t,x):=\frac{\partial f}{\partial t}(t,x),\quad f'_2(t,x):=\frac{\partial f}{\partial x}(t,x).
\]

\begin{lem}\label{le:malder}Suppose that $b,\sigma\in\mathcal{C}^{1,1}([0,T]\times \mathbb{R})$ and there exists a constants $c>0$ so that $|\sigma(t,x)|\geq c$ for all $(t,x)\in [0,T]\times\mathbb{ R}.$ In addition, we assume that the function
$$m(t,x):=\left(b'_2-\frac{b\sigma'_2}{\sigma}-\frac{\sigma'_1}{\sigma}\right)(t,x)$$
is bounded on $[0,T]\times\mathbb{ R}.$ Then, the Malliavin derivative of $X_t$ with respect to Brownian motion $W$ is given by
$$D_s X_t=\sigma(t,X_t)\left(\int_s^t(K_H)_1'(v,s)\exp \left(
 \int_v^tm(u,X_u)d u\right)d v\right)\ind_{[0,t]}(s).$$
\end{lem}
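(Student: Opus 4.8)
The plan is to derive the Malliavin derivative $D_sX_t$ from the chain rule applied to the pathwise SDE \eqref{7hj3}, combined with the known action of $D$ on the Volterra-type stochastic integral against $B^H$. First I would apply the operator $D_s$ formally to both sides of \eqref{7hj3}. Since the drift term contributes $\int_0^t b'_2(r,X_r)D_sX_r\,dr$ and the diffusion term, written via \eqref{densityCIR02} as $\int_0^t\sigma(r,X_r)\,dB^H_r = \int_0^t\big(\int_r^T \sigma(r,X_r)(K_H)'_1(v,r)\,dv\big)\,dW_r$ after an integration by parts in the kernel, one gets a contribution of the form $\sigma(t,X_t)(K_H)'_1(t,s)\ind_{[0,t]}(s)$ from differentiating the Brownian integrand at the point $s$, plus $\int_0^t \sigma'_2(r,X_r)D_sX_r\,dB^H_r$ from the dependence of the integrand on $X_r$. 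Here I should be careful that $K_H(t,s)$ itself is only defined for $s\le t$ and vanishes (or is extended by zero) otherwise, which is why the indicator $\ind_{[0,t]}(s)$ appears, and that $(K_H)'_1$ denotes $\partial_t K_H$.

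The key step is then to recognize that $\varphi_r := D_sX_r$ (for fixed $s$, as a function of $r\ge s$) solves a linear pathwise equation
\begin{equation*}
\varphi_t = \sigma(t,X_t)(K_H)'_1(t,s)\ind_{[0,t]}(s) + \int_s^t b'_2(r,X_r)\varphi_r\,dr + \int_s^t \sigma'_2(r,X_r)\varphi_r\,dB^H_r,
\end{equation*}
which is, modulo the inhomogeneous driving term, of the same type as the equation satisfied by the spatial derivative $\partial_{x_0}X_t$ of the flow. One standard way to solve such a linear equation is to make the ansatz $\varphi_t = \sigma(t,X_t)\,\psi_t$ and compute $d\sigma(t,X_t) = \sigma'_1(t,X_t)\,dt + \sigma'_2(t,X_t)\,dX_t = \big(\sigma'_1 + b\sigma'_2\big)(t,X_t)\,dt + \sigma'_2(t,X_t)\sigma(t,X_t)\,dB^H_t$; substituting and using the product rule for pathwise Young integrals, the $dB^H$ terms cancel and $\psi$ is seen to satisfy a linear ordinary (random) differential equation $d\psi_t = m(t,X_t)\psi_t\,dt$ with $m$ exactly the combination $b'_2 - b\sigma'_2/\sigma - \sigma'_1/\sigma$ appearing in the statement. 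Solving this ODE with the initial value dictated by the inhomogeneous term gives $\psi_t = \int_s^t (K_H)'_1(v,s)\exp\big(\int_v^t m(u,X_u)\,du\big)\,dv$, and hence the claimed formula.

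To make this rigorous I would invoke the standard results on Malliavin differentiability of solutions to Young SDEs under $\mathcal{C}^{1,1}$ coefficients — the hypotheses $|\sigma|\ge c$ and boundedness of $m$ ensure the division by $\sigma$ is licit and that the relevant processes have the required integrability, so $X_t\in\mathbb{D}^{1,2}$ (indeed in $\mathbb{D}^{1,p}$ for all $p$) and the chain rule applies. The main obstacle I anticipate is the careful manipulation of the $dB^H$-integral: justifying the interchange of $D_s$ with the pathwise Riemann–Stieltjes integral, rewriting $\int_0^t\sigma(r,X_r)\,dB^H_r$ as a $dW$-integral via the kernel $K_H$ and an integration by parts (which is where $(K_H)'_1$ and the lower limit $s$ in the inner integral come from), and verifying that the Young-integral product rule used in the $\varphi_t = \sigma(t,X_t)\psi_t$ computation is valid given the Hölder regularity $H>\tfrac12$. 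Everything downstream — solving the scalar linear ODE for $\psi$ and reading off the exponential — is routine once that bookkeeping is in place.
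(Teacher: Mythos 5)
Your computational core (differentiate the equation, pass to the linearized pathwise equation for $\varphi_t=D_sX_t$, make the ansatz $\varphi_t=\sigma(t,X_t)\psi_t$ so that the $dB^H$-terms cancel against $d\ln\sigma(t,X_t)$ and $m$ emerges) is indeed the computation behind the formula, but the step you dismiss as "invoking standard results" is exactly where the lemma's content lies, and your appeal does not work under the stated hypotheses. The lemma assumes only that the \emph{combination} $m=b'_2-b\sigma'_2/\sigma-\sigma'_1/\sigma$ is bounded; $b'_2$ and $\sigma'_2$ may individually be unbounded, so the coefficients of (\ref{7hj3}) need not be globally Lipschitz and the standard Malliavin-differentiability (indeed even the standard existence) results for Young/fractional SDEs do not apply directly. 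The paper's proof is precisely designed around this: following Lemma 5.3 of \cite{DungPG2014}, one divides by $\sigma$ (a Lamperti-type change of variables $Y_t=\int^{X_t}dz/\sigma(t,z)$), obtaining an equation with additive fractional noise — equation (5.6) there — whose drift has spatial derivative $m$; boundedness of $m$ gives global Lipschitz and linear growth conditions, hence Malliavin differentiability of $Y_t$, and one transfers back via $D_sX_t=\sigma(t,X_t)D_sY_t$, with $D_sY_t$ solving the scalar linear ODE with coefficient $m(u,X_u)$ and source $(K_H)'_1(\cdot,s)$. Without this device (or a substitute), the very starting point of your argument — that $X_t\in\mathbb{D}^{1,2}$ and that $D_s$ can be interchanged with the pathwise integral to yield your linearized equation — is unjustified.

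There are also concrete errors in your bookkeeping, which you yourself flag as the delicate part. The identity $\int_0^t\sigma(r,X_r)\,dB^H_r=\int_0^t\bigl(\int_r^T\sigma(r,X_r)(K_H)'_1(v,r)\,dv\bigr)dW_r$ is false (its right-hand side equals $\int_0^t\sigma(r,X_r)K_H(T,r)\,dW_r$); the correct rewriting carries $\sigma(v,X_v)$ inside the inner integral with upper limit $t$, and its $dW$-integrand is anticipating. Consequently the free term in your linearized equation is wrong: it should be $\int_s^t\sigma(v,X_v)(K_H)'_1(v,s)\,dv$, i.e.\ the source enters as the $dt$-density $\sigma(t,X_t)(K_H)'_1(t,s)\,dt$ with zero initial condition at $t=s$, not as the pointwise summand $\sigma(t,X_t)(K_H)'_1(t,s)$ you display. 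Likewise the reduced equation for $\psi$ must keep the source, $d\psi_t=m(t,X_t)\psi_t\,dt+(K_H)'_1(t,s)\,dt$ with $\psi_s=0$; the homogeneous ODE $d\psi_t=m\psi_t\,dt$ that you state yields $\psi_t=\psi_{t_0}\exp\bigl(\int_{t_0}^tm\,du\bigr)$ and cannot produce the integral formula $\int_s^t(K_H)'_1(v,s)\exp\bigl(\int_v^tm(u,X_u)\,du\bigr)dv$. With these corrections your derivation does reproduce the claimed expression, but as written the displayed equations do not lead to the conclusion, and the differentiability gap above remains the essential missing ingredient.
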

\begin{proof}The proof is the same as that of Lemma 5.3 in \cite{DungPG2014}. Notice that the boundedness of $m$ ensures that the equation (5.6) in \cite{DungPG2014} satisfies the global Lipschitz and linear growth conditions and hence, its solution is Malliavin differentiable.
\end{proof}

\begin{thm}\label{832}Suppose the assumption of Lemma \ref{le:malder}. In addition, we assume that there exists $M>0$ so that $|m(t,x)|,|m'_2(t,x)\sigma(t,x)|,|\sigma'_2(t,x)|\leq M$ for all $(t,x)\in[0,T]\times\mathbb{ R}.$ Then, for each $t\in (0,T],$ the density of $X_t$ exists and
$$\rho_{X_t}(x)\geq c_1\exp\left(-\frac{(x-E[X_t])^2}{2c_2t^{2H}}\right),\,\,\,x\in \mathbb{R},$$
where $c_1,c_2$ are positive constants.
\end{thm}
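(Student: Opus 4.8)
The plan is to apply Theorem \ref{oklf} (in its $\Phi_F$-version, as noted in the remark following Theorem \ref{9hj3}, or directly via $G_F$) to the random variable $F = X_t - E[X_t]$, so the essential task reduces to two things: a uniform lower bound $G_{X_t}\geq c_2 t^{2H}$ (up to renaming constants) and a uniform bound $|h_F(F)|\leq M'$ for some constant $M'$ possibly depending on $t\in(0,T]$ but controllable near $t=T$, after which \eqref{bvj4} gives the claim with $c_1 = \rho_{X_t}(0)\,e^{-M't}$ absorbed appropriately. First I would use Lemma \ref{le:malder} to write $D_s X_t$ explicitly and, mimicking Proposition 3.7 in \cite{Nourdin2009}, produce the companion formula for $-D_s L^{-1} X_t$ as an integral over the Ornstein--Uhlenbeck parameter $u$ of the Mehler-type expression in which each occurrence of $X_v$ is replaced by $e^{-u}X_v + \sqrt{1-e^{-2u}}X'_v$. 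Then $G_{X_t} = \langle DX_t, -DL^{-1}X_t\rangle_{\mathfrak H}$ becomes a double integral against the covariance kernel of the fractional Brownian motion, and since $|\sigma|\geq c$ together with positivity of the kernels $(K_H)'_1$ and the exponential factors forces the integrand to be bounded below by $c^2$ times the covariance density, one gets $G_{X_t}\geq c^2\,\mathrm{Var}(B^H_t) = c^2 t^{2H}$; this is the analogue of the lower bound computed in the additive-functional example.

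Next I would estimate $h_F(F) = E\big[\,G_F^{-2}\langle DG_F, -DL^{-1}F\rangle_{\mathfrak H}\,\big|\,F\big]$. Using the lower bound $G_F\geq c^2 t^{2H}$ the prefactor $G_F^{-2}$ is bounded, so it suffices to bound $\|DG_F\|_{\mathfrak H}$ and $\|DL^{-1}F\|_{\mathfrak H}$ in, say, $L^4(\Omega)$ (Cauchy--Schwarz then handles the inner product, and the conditional expectation only shrinks things). The derivative $DL^{-1}F$ is controlled by the Mehler representation together with the explicit form of $D_sX_t$; the second Malliavin derivative $D^2X_t$ — needed to differentiate $G_F$ — is obtained by differentiating the linear equation (5.6) of \cite{DungPG2014} once more, which is where the extra hypotheses $|m|, |m'_2\sigma|, |\sigma'_2|\leq M$ enter: they guarantee that the equation for $D^2X_t$ again has bounded, Lipschitz, linearly growing coefficients, hence $D^2X_t$ has finite moments of all orders. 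Assembling these bounds yields $|h_F(F)|\leq M'$ a.s. with $M'$ a constant (uniform in $t\in(0,T]$, after checking the $t$-dependence does not blow up), and I would also need to verify the integrability hypotheses of Corollary \ref{vik} — that $F/G_F$ and $G_F^{-2}\langle DG_F,-DL^{-1}F\rangle_{\mathfrak H}$ lie in $L^2(\Omega)$ — which follow from the same moment estimates combined with $G_F\geq c^2 t^{2H}$.

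The main obstacle I anticipate is the control of the second Malliavin derivative $D^2 X_t$ and, through it, of $DG_F$: unlike the additive-functional example where $D^2Y_T$ and $D^2L^{-1}Y_T$ had a clean sign and the argument only used \eqref{bvj2}--\eqref{bvj3}, here there is no sign to exploit and one genuinely needs quantitative $L^p$ bounds on $D^2X_t$, which requires carefully propagating the constants through the variation-of-constants formula for the linearized flow and its derivative; keeping track of the explicit $t^{2H}$ scaling (so that $c_2$ comes out as a genuine constant rather than something degenerating as $t\downarrow 0$) is the delicate bookkeeping point. Once those estimates are in hand, the conclusion is immediate: plug $G_F\geq c^2t^{2H}$ and $|h_F(F)|\leq M'$ into \eqref{bvj4} of Theorem \ref{oklf}, rename $\sigma^2_{min} = c^2 t^{2H}$, and absorb $\rho_{X_t}(0)e^{-M'|x|}$-type factors into constants $c_1,c_2$ to obtain the stated Gaussian lower bound.
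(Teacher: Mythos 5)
Your overall strategy (a lower bound of order $c^2t^{2H}$ for the Malliavin ``variance'', a uniform bound on the correction term, then \eqref{bvj4}) is the same as the paper's, and your first step is essentially right: the paper gets $\Phi_F\geq c^2e^{-2MT}t^{2H}$ from $\sigma\geq c$ together with the deterministic envelope $e^{-MT}K_H(t,s)\leq\varphi(t,s)\leq e^{MT}K_H(t,s)$ for the kernel appearing in Lemma \ref{le:malder}. The genuine gap is at the step you yourself flag as the main obstacle. To use \eqref{bvj4} you need an \emph{almost sure} bound $|h_F(F)|\leq M'$, i.e. $|h_F(z)|\leq M'$ for a.e.\ $z$. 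Your plan --- bound $\|DG_F\|_{\mathfrak{H}}$ and $\|DL^{-1}F\|_{\mathfrak{H}}$ in $L^4$ via moment estimates for $D^2X_t$ and ``assemble'' these into $|h_F(F)|\leq M'$ a.s. --- cannot deliver this: finite moments of $G_F^{-2}\langle DG_F,-DL^{-1}F\rangle_{\mathfrak{H}}$ only give that $h_F(F)$ lies in some $L^p$, not that it is bounded; the function $h_F(z)$ could still grow with $|z|$, in which case $\exp\bigl(-\int_0^x h_F(z)dz\bigr)$ is no longer comparable to $e^{-M'|x|}$ and the Gaussian lower bound is lost. No amount of $L^p$ control of $D^2X_t$ closes this.

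What the paper actually does is avoid absolute estimates on second derivatives altogether and prove a pointwise \emph{ratio} bound: from the explicit formula of Lemma \ref{le:malder} and the hypotheses $|m|,|m_2'\sigma|,|\sigma_2'|\leq M$ one gets $|D_rD_sX_t|\leq M(1+T)e^{MT}K_H(t,r)\,D_sX_t$ a.s., i.e. the second derivative is dominated by a deterministic kernel times the first derivative. This propagates to $|D_r\Phi_F|\leq 2M(1+T)e^{MT}K_H(t,r)\,\Phi_F$, so in $\overline{h}_F$ the factor $\Phi_F^{-2}$ is cancelled by two factors of $\Phi_F$ produced in the numerator (the second via $K_H(t,r)\leq e^{MT}\varphi(t,r)$ and $\sigma\geq c$), giving the deterministic bound $|\overline{h}_F(F)|\leq 2M(1+T)e^{2MT}/c$ a.s. Note also that the paper runs the argument through the Clark--Ocone version, Theorem \ref{9hj3}, with $\Phi_F=\int_0^t D_sX_tE[D_sX_t|\mathcal{F}_s]ds$, rather than through $G_F$ and the Mehler representation that you take as the primary route: conditioning on $\mathcal{F}_s$ preserves these pointwise bounds (since $D_sX_t\geq 0$ and the envelopes are deterministic), whereas the Mehler route would force you to redo the same pathwise estimates for the solution driven by the shifted path $e^{-u}X+\sqrt{1-e^{-2u}}\,X'$ and its first and second derivatives. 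In short: the missing idea is the a.s.\ domination of $D^2X_t$ (and hence of $D\Phi_F$) by the first derivative, not quantitative $L^p$ bounds.
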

\begin{proof}We assume $\sigma(t,x)\geq c$, the case $\sigma(t,x)\leq -c$ can be treated similarly. Thus we always have $D_s X_t\geq 0\,\,a.s.$ For the simplicity, we write $D_s X_t=\sigma(t,X_t)\varphi(t,s),$ where
$$\varphi(t,s):=\left(\int_s^t(K_H)_1'(v,s)\exp \left( \int_v^tm(u,X_u)d u\right)d v\right)\ind_{[0,t]}(s).$$
We have
$$D_r\varphi(t,s)=\left(\int_s^t(K_H)_1'(v,s)\left(\int_v^tm'(u,X_u)\sigma(u,X_u)\varphi(u,r)d u\right)d v\exp \left(\int_v^tm(u,X_u)d u\right)d v\right)\ind_{[0,t]}(s).$$
The boundedness of $m$ yields
$$e^{-MT}K_H(t,s)\leq \varphi(t,s)\leq e^{MT}K_H(t,s),\,\,0\leq s\leq t\leq T.$$
Since $m'\sigma$ is bounded, this implies that $|\int_v^t m'(u,X_u)\sigma(u,X_u)\varphi(u,r)d u|\leq M\int_v^t e^{MT}K_H(u,r)du\leq MT e^{MT}K_H(t,r),$ and hence,
$$|D_r\varphi(t,s)|\leq MT e^{MT}K_H(t,r)\varphi(t,s),\,\,0\leq r, s\leq t\leq T.$$
We have
\begin{align*}
D_rD_s X_t&=\sigma'_2(t,X_t)D_rX_t\varphi(t,s)+\sigma(t,X_t)D_r\varphi(t,s)\\
&=\sigma'_2(t,X_t)\varphi(t,r)D_sX_t+\sigma(t,X_t)D_r\varphi(t,s)
\end{align*}
and
\begin{align*}
|D_rD_sX_t|&\leq M e^{MT}K_H(t,r)D_sX_t+\sigma(t,X_t)MT e^{MT}K_H(t,r)\varphi(t,s)\\
&=M(1+T) e^{MT}K_H(t,r)D_sX_t,\,\,0\leq r, s\leq t\leq T.
\end{align*}
Fixed $t\in (0,T].$ We now apply Theorem \ref{9hj3} to $F:=X_t-E[X_t].$ We have
\begin{align*}
\Phi_F&=\int_0^tD_sX_tE[D_sX_t|\mathcal{F}_s]ds\\
&=\int_0^t\sigma(t,X_t)\varphi(t,s)E[\sigma(t,X_t)\varphi(t,s)|\mathcal{F}_s]ds\\
&\geq c^2e^{-2MT}\int_0^t K^2_H(t,s)ds=c^2e^{-2MT} t^{2H}\,\,a.s.
\end{align*}
Furthermore, for $0\leq r\leq t,$
$$D_r\Phi_F=\int_0^tD_rD_sX_tE[D_sX_t|\mathcal{F}_s]ds+\int_0^tD_sX_tE[D_rD_sX_t|\mathcal{F}_s]\ind_{[0,s]}(r)ds,$$
which leads us to
$$|D_r\Phi_F|\leq 2M(1+T) e^{MT}K_H(t,r)\int_0^tD_sX_tE[D_sX_t|\mathcal{F}_s]ds=2M(1+T) e^{MT}K_H(t,r)\Phi_F.$$
Consequently, we deduce
\begin{align*}
\big|\int_0^tD_r\Phi_FE[D_rF|\mathcal{F}_r]dr\big|&\leq 2M(1+T) e^{MT}\int_0^t\Phi_FK_H(t,r)E[D_rF|\mathcal{F}_r]dr\\
&\leq 2M(1+T) e^{MT}\Phi_F\int_0^te^{MT}\varphi(t,r)E[D_rF|\mathcal{F}_r]dr\\
&= 2M(1+T) e^{2MT}\Phi_F\int_0^t\frac{D_rX_t}{\sigma(t,X_t)}E[D_rF|\mathcal{F}_r]dr\\
&\leq \frac{2M(1+T) e^{2MT}}{c}\Phi_F^2\,\,a.s.
\end{align*}
Recalling the definition of $\overline{h}_F,$ we obtain
$$|\overline{h}_F(F)|\leq \frac{2M(1+T) e^{2MT}}{c}\,\,a.s.$$
So we can conclude that
$$\rho_F(x)\geq \rho_F(0)\exp\left(-\frac{x^2}{2c^2e^{-2MT} t^{2H}}-\frac{2M(1+T) e^{2MT}}{c}|x|\right),\,\,\,x\in \mathbb{R}.$$
Now it is easy to see that there exist positive constants $c_1,c_2$ such that
$$\rho_F(x)\geq c_1\exp\left(-\frac{x^2}{2c_2t^{2H}}\right),\,\,\,x\in \mathbb{R}.$$
This finishes the proof because $\rho_{X_t}(x)=\rho_F(x-E[X_t]).$
\end{proof}
\begin{rem} A simple example verifying Theorem \ref{832} is when $b(t,x)=\sigma(t,x)=f(x),$ where $f\in \mathcal{C}^1(\mathbb{R})$ with bounded derivative and $|f(x)|\geq c$ for all $x\in \mathbb{R}.$

\end{rem}


\noindent {\bf Acknowledgments.} The author would like to thank the anonymous referees for their valuable comments for improving the paper. This research was funded by Viet Nam National Foundation for Science and Technology Development (NAFOSTED) under grant number 101.03-2019.08. A part of this paper was done while the author was visiting the Vietnam Institute for Advanced Study in Mathematics (VIASM). He would like to thank the VIASM for financial support and hospitality.

\end{document}